\theoremstyle{plain}
\newtheorem{theo}{Theorem}[section]
\newtheorem{lemma}[theo]{Lemma}
\newtheorem{propo}[theo]{Proposition}
\newtheorem{coro}[theo]{Corollary}
\theoremstyle{definition}
\newtheorem{defi}[theo]{Definition}
\newtheorem{rem}[theo]{Remark}
\newcommand\id{\operatorname{id}}
\newcommand\cg{\mathcal {G}}
\newcommand\ck{\mathcal {K}}
\newcommand{\CPO}{\operatorname{\bf CPO}}
\date{June 16, 2025}
\begin{document}

\title[Are chain-complete posets co-wellpowered?]
{Are chain-complete posets co-wellpowered?}

\author[J. Jurka and J. Rosick\'{y}]
{J. Jurka and J. Rosick\'{y}}

\thanks{Both authors were supported by the Grant Agency of the Czech Republic under the grant 
              22\nobreakdash-02964S} 

\address{
\newline J. Jurka\newline
Department of Mathematics and Statistics\newline
Masaryk University, Faculty of Sciences\newline
Kotl\'{a}\v{r}sk\'{a} 2, 611 37 Brno, Czech Republic\newline
jan.jurka@mail.muni.cz
\newline J. Rosick\'{y}\newline
Department of Mathematics and Statistics\newline
Masaryk University, Faculty of Sciences\newline
Kotl\'{a}\v{r}sk\'{a} 2, 611 37 Brno, Czech Republic\newline
rosicky@math.muni.cz
}

\begin{abstract}
We show that the category $\CPO$ of chain-complete posets is nearly locally presentable, we give a characterization of strong epimorphisms in $\CPO$, and we offer an alternative proof of co-wellpoweredness of $\CPO$.
\end{abstract} 

\keywords{}
\subjclass{}

\maketitle

\section{Introduction}

Chain-complete posets play an important role in theoretical computer science, in denotational semantics and domain theory. The category 
$\CPO$ of chain-complete posets is complete and cocomplete \cite{M}.
Epimorphisms in $\CPO$ are described in \mbox{\cite[Section 3, Theorem]{P}}
and the same paper shows that $\CPO$ is co-wellpowered. A well-known deficiency of chain complete posets is that the category $\CPO$ is not locally presentable (see \cite[1.18(5)]{AR}). In fact, no non-empty chain-complete poset is presentable (see \cite[1.14(4)]{AR}). The reason is that directed colimits produce new chains whose joins  have to be added. But this does not happen for coproducts and every finite chain-complete poset is nearly finitely presentable in the sense of \cite{PR}. We will show that $\CPO$ is nearly locally finitely presentable. Although this was claimed in \cite{PR}, the proof there was based on a wrong belief that epimorphisms are one-step dense under directed joins (and that the two-element chain $2$ is a strong generator). Even, epimorphisms are not iterated dense under directed joins but they are dense under all joins (which implies that $\CPO$ is co-wellpowered). This corrects our paper \cite{JR} (see \cite{JR_err}).

We will show that the three element chain $3$ is a strong generator
in $\CPO$. We will also give a direct proof of co-wellpoweredness
of $\CPO$ avoiding a characterization of epimorphisms. In addition,
we will give a characterization of strong epimorphisms in $\CPO$. 
The category $\CPO$ has both (epimorphism, strong monomorphism) and (strong epimorphism, monomorphism) factorizations (see \cite[0.5]{AR}). Every regular epimorphism is strong but there are strong epimorphisms which are not regular. In fact, $\CPO$ cannot have (regular epimorphism, mo\-no\-morphism) factorizations because then it would be locally presentable
provided that Vop\v enka's principle is assumed (see \cite{PR}).

Let us recall that a chain-complete poset is a poset where every chain has a join. One often calls them complete partial orders, briefly cpo's. Morphisms in the category $\CPO$ of cpo's are maps preserving joins of chains. We will call these morphisms {\it cpo maps}. Every directed subset of a cpo has a join and morphisms of cpo's preserve directed joins (see \cite[1.7]{AR}). Since we allow empty chains, every cpo has the smallest element $0$ and cpo maps preserve $0$. Hence coproducts are disjoint unions with the smallest elements identified.

\vskip 3 mm

\noindent {\bf Acknowledgement.} We are grateful to J. Ad\'amek for pointing out \cite{LP,P} to us.

\section{$\CPO$ has a strong generator}
A set $\cg$ of objects in a category $\ck$ is a \textit{generator} provided that for each pair $f,g\colon A\to B$ of distinct morphisms there exists an object $G$ in $\cg$ and a morphism $h\colon G\to A$ such that $fh\neq gh$. A generator $\cg$ is called \textit{strong} provided that for each object $B$ and each proper subobject of $B$ there exists a morphism $G\to B$ with $G$ in $\cg$ which does not factorize through that subobject. The following characterization of strong generators holds in cocomplete categories (see \cite[0.6]{AR}): A set $\cg$ of objects is a strong generator iff every object of the category is an extremal quotient of coproduct of objects from $\cg$.

\begin{rem}\label{mono}
\begin{enumerate}[(1)]
\item Monomorphisms in $\mathbf{CPO}$ are precisely the injective cpo maps and isomorphisms are precisely the order-reflecting bijections. This means that proper subobjects in $\mathbf{CPO}$ are precisely the injective cpo maps that are either not surjective, or they are surjective but not order-reflecting.
\item Two-element chain $2$ is clearly a generator in $\CPO$. But it is not strong (as it is claimed in \cite{PR}). Indeed, consider
a three-element cpo $A$ with two incomparable elements $1$ and $2$ (and the smallest element $0$). Let $3$ be the three-element chain $0<1<2$. Then the identity $\id$ on $\{0,1,2\}$ makes $A$ a proper subobject of $3$ but every cpo map $2\to 3$ factorizes
through $\id$.
\end{enumerate}
\end{rem}

\begin{lemma}
Three-element chain $3$ forms a strong generator in $\mathbf{CPO}$.
\end{lemma}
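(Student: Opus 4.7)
The plan is to verify directly, from the definitions, that $\{3\}$ is a generator and is strong, making essential use of the classification of proper subobjects provided by Remark \ref{mono}.

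For the generator property, given distinct cpo maps $f, g : A \to B$, I would pick $a \in A$ with $f(a) \neq g(a)$. Because cpo maps preserve the smallest element, $a \neq 0_A$, so the assignment $h(0) = 0_A$, $h(1) = h(2) = a$ defines a monotone (hence cpo) map $h : 3 \to A$ with $fh \neq gh$.

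For strongness, let $m : A \to B$ be a proper subobject. By Remark \ref{mono}(1), $m$ is an injective cpo map that either (a) is not surjective, or (b) is surjective but not order-reflecting. In case (a), I would pick $b \in B \setminus m(A)$ and let $h : 3 \to B$ be given by $h(0) = 0_B$, $h(1) = h(2) = b$; this is a cpo map, and any factorization $h = m \circ g$ would force $b \in m(A)$, a contradiction. In case (b), I would choose $a_1, a_2 \in A$ with $m(a_1) \leq m(a_2)$ but $a_1 \not\leq a_2$, and define $h : 3 \to B$ by $h(0) = 0_B$, $h(1) = m(a_1)$, $h(2) = m(a_2)$. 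Any factorization $h = m \circ g$ would, by injectivity of $m$, force $g(1) = a_1$ and $g(2) = a_2$, so monotonicity of $g$ would yield $a_1 \leq a_2$, a contradiction.

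The verification is essentially routine; the only real subtlety is being precise about what proper subobjects in $\CPO$ are, which is exactly the content of Remark \ref{mono}(1). The role played by the element $2 \in 3$ is precisely to supply a second non-zero slot above $0_B$ into which one can load either the missing image element or the element witnessing the failure of order-reflection --- which is exactly what, as Remark \ref{mono}(2) shows, the two-element chain cannot provide on its own.
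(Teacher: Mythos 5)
Your proof is correct and follows essentially the same route as the paper: in the non-surjective case you send both $1$ and $2$ to a missed element, and in the non-order-reflecting case you send $1,2$ to a pair witnessing the failure of order reflection (the paper phrases this with incomparable $a_1,a_2$ mapping to comparable images, which is equivalent to your formulation given injectivity of $m$). Your write-up is just a more detailed version of the paper's argument, including the routine verification that $3$ is a generator which the paper dismisses as obvious.
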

\begin{proof}
Obviously, $3$ is a generator. Suppose that $f\colon A\to B$ is a proper subobject in $\CPO$. If $f$ is not surjective there exists an element $b \in B - f[A]$, and thus the cpo map $3 \to B$ sending $1$ and $2$ to $b$ does not factorize through $f$. If $f$ is surjective but not order-reflecting there exist elements $a_1, a_2 \in A$ incomparable in $A$ but $a_1<a_2$ in $B$.
Then the cpo map $3\to B$ sending $1$ to $a_1$ and $2$ to $a_2$ does not factorize through $A$.
\end{proof}

\section{$\CPO$ is co-wellpowered}
Following \cite[Section 4, Corollary 2]{P}, $\CPO$ is co-wellpowered.\footnote{Note that the result in \cite{P} is stated for a subset system $Z$. To obtain the result for chain-complete posets we choose $Z(P)$ to be the set of all directed subsets of a poset $P$.} In this section, we offer a direct proof avoiding a characterization of epimorphisms.  

\begin{defi}
Let $X \subseteq A$ be a subset of a cpo $A$. Then
\begin{itemize}
\item a {\it closure $\operatorname{dcl}(X)$ of $X$ under directed joins} is the smallest subset of $A$ that is closed under directed joins and contains $X$,
\item a cpo map $f \colon A \to B$ is called {\it dense} if $B$ is the closure of the image $f[A]$ under directed joins, and
\item a {\it closure $\operatorname{cl}(X)$ of $X$ under all joins} is the smallest subset of $A$ that is closed under all joins and contains $X$.
\end{itemize}
\end{defi}

\begin{rem}
The directed join closure of $X \subseteq A$ can be constructed by transfinite induction as follows:
\begin{align*}X_0&= X, \\ X_{\alpha + 1}&= \big\{\bigvee Y \mid Y \subseteq X_\alpha \text{ is directed in $X_\alpha$}\big\}, \\ X_\alpha &= \bigcup_{\beta < \alpha} X_\beta \text{ for each limit ordinal $\alpha$}.\end{align*} 
It is obvious that $\operatorname{dcl}(X) = X_{|A|^+}$.
\end{rem}

\begin{lemma}\label{lem-epi}
Let $f \colon A \to B$ be a morphism in $\CPO$, then
$$\operatorname{dcl}(f[A]) = B \quad \Longrightarrow \quad f\ \text{is an epi} \quad \Longrightarrow \quad \operatorname{cl}(f[A]) = B.$$
\end{lemma}
\begin{proof}
The first implication follows easily by transfinite induction. We prove the second implication. Suppose that $f\colon A\to B$ is a morphism in $\CPO$. Consider the factorization
$$
A \xrightarrow{\ e } C \xrightarrow{\ m } B 
$$
of $f$ through $C := \operatorname{cl}(f[A])$. Note that $e$ is given by the codomain-restriction and $m$ is the inclusion. Since $C$ is defined to be the closure in $B$ under all joins, it is a cpo, and the maps $e$ and $m$ are cpo maps. Consider the pushout
$$
\xymatrix@C=4pc@R=3pc{
C \ar [r]^{m} \ar [d]_{m}& B \ar [d]^{g}\\
B \ar [r]_{h}& D
}
$$
in $\CPO$. We claim that $g \neq h$ if $C \neq B$ because $g$ and $h$ differ on each element of $B \setminus C$.  

Let us first explicitly describe $D$, $g$, and $h$. Consider the set $C \cup B_1 \cup B_2$, where $B_i = \{(b,i) \mid b \in B \setminus C\}$ for $i \in \{1,2\}$. We will denote by $\pi_1$ the projection onto the first component. Consider a binary relation $\lesssim$ on $D$ such that $x \lesssim y$ if and only if 
\begin{itemize}
\item $x, y \in C, x \leq_B y$ or
\item $x \in C, y \in B_1 \cup B_2, x \leq_B \pi_1(y)$ or 
\item $x \in B_1 \cup B_2, y \in C, \pi_1(x) \leq_B y$ or 
\item $x, y \in B_1, \pi_1(x) \leq_{B} \pi_1(y)$ or
\item $x, y \in B_2, \pi_1(x) \leq_{B} \pi_1(y)$ or
\item $x = (b,1)$, $y = (d,2)$ or $x = (b,2)$, $y = (d,1)$, where $b, d \in B \setminus C$, and there exists $b' \geq_B b$ and a subset $Z \subseteq B \setminus C$ such that $b' \in \operatorname{dcl}(Z)$ in $B$, and a set $\{c_z \in C \mid z \in Z\}$ of elements such that $z \leq_B c_z \leq_B d$ for each $z \in Z$.
\end{itemize}
The relation $\lesssim$ is obviously reflexive. It is also easily seen to be transitive. Indeed, the last condition ensures that if $(b, 1) \lesssim c \lesssim (d, 2)$, where $c \in C$, then $(b,1) \lesssim (d,2)$. Moreover, again by the last condition, if $(b,1) \lesssim (d,2) \lesssim (e,1)$, then ${b \leq_B d \leq_B e}$, and thus $(b, 1) \lesssim (e,1)$. Also, by the last condition, if ${(b,1) \lesssim (e,1) \lesssim (d,2)}$, then ${b \leq_B e \leq_B e'}$, and thus $(b,1) \lesssim (d,2)$. Analogous arguments demonstrating transitivity also work if we switch the roles of indices $1$ and $2$. Note that because of the last condition the relation $\lesssim$ is not necessarily antisymmetric. Thus, let $(D,\leq)$ be the antisymmetric quotient of ${(C \cup B_1 \cup B_2,\lesssim)}$. The maps $g$ and $h$ can be described as follows:
$$
g(b) =
\begin{cases}
b, \qquad\; \text{ if } b \in C,\\
[(b,1)], \text{ if }b \in B \setminus C,\\
\end{cases}
\quad
h(b) =
\begin{cases}
b, \qquad\; \text{ if } b \in C,\\
[(b,2)], \text{ if }b \in B \setminus C.\\
\end{cases}
$$ 
Note that $[b]$ is a singleton for each $b \in C$ and thus on the right-hand side of the definitions of $g$ and $h$ we can indeed safely write $b$. The poset $(D, \leq)$ is a cpo. Indeed, suppose that $Y$ is a chain in $D$. 
\begin{itemize}
\item If $B_1 \cap Y$ is cofinal in $Y$, then let $X = \pi_1(B_1 \cap Y)$. In this way we obtain a chain $X$ in $B$ such that $g(X) = B_1 \cap Y$. If $\sup X \in B \setminus C$, then the supremum of $g(X)$ exists and equals $[(\sup X, 1)]$. Observe that the last condition is used here. If $\sup X \in C$, then the supremum of $g(X)$ exists and equals $\sup X$. Therefore, the supremum of $B_1 \cap Y$ exists in $D$, and thus the supremum of $Y$ exists in $D$ and equals the supremum of $B_1 \cap Y$ in $D$.
\item Analogously, if $B_2 \cap Y$ is cofinal in $Y$, then $\sup Y$ exists in $D$. 
\item If $C \cap Y$ is cofinal in $Y$, then the supremum of $Y$ exists in $D$ and is identical to the supremum $s$ of $C \cap Y$ in $B$, since $s$ belongs to $C$ because it is a directed join of elements from $C$. 
\end{itemize}
Since at least one of the sets $B_1 \cap Y$, $B_2 \cap Y$, $C \cap Y$ must be cofinal in $Y$, we see that $D$ is a cpo.\par
Moreover, the maps $g$ and $h$ are cpo maps. Let us show it for $g$, the argument for $h$ is completely analogous. Suppose that $X = \{b_j \mid j \in J\}$ is a chain in $B$. We wish to show that $g(\sup X) = \sup g(X)$. Note that $g$ is obviously isotone.
\begin{itemize}
\item If $C \cap X$ is not cofinal in $X$, then there exists $k \in J$ such that each ${b_\ell \geq b_k}$ satisfies $g(b_\ell) = [(b_\ell, 1)]$, in other words, the chain $g(X)$ is eventually contained completely in $B_1$. Now note that if $\sup X$ belongs to $B \setminus C$, then ${g(\sup X) = [(\sup X, 1)] = \sup g(X)}$, and if $\sup X$ belongs to $C$, then $g(\sup X) = \sup X = \sup g(X)$.
\item If $C \cap X$ is cofinal in $X$, then the supremum of $g(X)$ exists in $D$ and is identical to the supremum $s$ of $g(C \cap X) = C \cap X$ in $B$, since $s$ belongs to $C$ because it is a directed join of elements from $C$. Moreover, the latter also implies that $g(s) = s$. Thus,\newline
$g(\sup X) = g(\sup (C \cap X)) = \sup (C \cap X) = \sup g(C \cap X) = \sup g(X).$
\end{itemize}
To summarize, we've shown that $D$ is a cpo and $g, h$ are cpo maps. From the explicit description of $g$ and $h$ and the fact that $C$ is closed in $B$ under all joins wee see that $g \neq h$ if $C \neq B$, since they differ on each element of $B \setminus C$. Indeed, for the sake of contradiction, suppose that there exists $b \in B \setminus C$ such that $[(b,1)] = [(b,2)]$, i.e.\ $(b,1) \lesssim (b,2)$ and $(b,2) \lesssim (b,1)$. Hence there exists $b' \geq_B b$ and a subset $Z \subseteq B \setminus C$ such that $b' \in \operatorname{dcl}(Z)$ in $B$, and a set $\{c_z \in C \mid z \in Z\}$ of elements such that $z \leq_B c_z \leq_B b$ for each $z \in Z$. We will show that $b$ is the least upper bound of $\{c_z \in C \mid z \in Z\}$. Clearly, it is an upper bound. Now suppose that $x \in B$ is an upper bound, i.e. $c_z \leq_B x$ for each $z \in Z$. Therefore, $z \leq_B x$ for all $z \in Z$, and thus by transfinite induction and by $b' \in \operatorname{dcl}(Z)$ we obtain $b' \leq_B x$, which implies $b \leq_B x$. We conclude $b = \sup_{z \in Z} c_z$, which is in contradiction with $C$ being closed in $B$ under all joins.
\end{proof}

\begin{rem}
\begin{enumerate}[(1)]
\item Epimorphisms do not need to be dense, see \cite[Theorem 2]{LP}. Note that the result in \cite{LP} is stated for $\omega$-complete posets, however, as is pointed out in \cite[p.~1]{LP} the same counter-example works also for chain-complete posets.
\item Evidently, join dense maps do not need to be epimorphisms.

\item  Every cpo map $f \colon A \to B$ has a factorization ${f = m \cdot e}$ through $C = \operatorname{dcl}(f[A])$, where the partial order on the cpo $C$ is inherited from $B$, ${m \colon C \to B}$ is the inclusion, and $e \colon A \to C$ is the codomain-restriction of $f$. This factorization was observed in \cite{GL}.
\end{enumerate}
\end{rem}

\begin{propo}\label{prop_cowell}
$\CPO$ is co-wellpowered.
\end{propo}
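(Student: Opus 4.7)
The plan is to find a single cpo $A$ admitting a proper class of pairwise non-equivalent quotient epimorphisms. By the characterization above, an epimorphism from $A$ to $B$ is exactly a cpo map $f\colon A\to B$ with $\overline{f[A]}=B$ inside $B$, and two such epis are equivalent only when there is an isomorphism of codomains intertwining them. In particular, quotients with codomains of different cardinalities are non-equivalent, so it suffices to fix one $A$ and construct, for each infinite cardinal $\kappa$, a cpo $B_\kappa$ of cardinality at least $\kappa$ together with an epi $A\to B_\kappa$.

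I would take $A$ to be a suitable countable cpo, for instance the chain $\omega+1$ with a bottom element adjoined. For each $\kappa$, the codomain $B_\kappa$ is built by transfinite iteration: at each successor stage we adjoin new ``join points'' for selected directed subsets of the previous stage, and at limit stages we take unions. The central combinatorial tool is an almost disjoint family: at each stage we fix a family $\mathcal F$ of infinite subsets of the current stage whose pairwise intersections are finite, arrange each $X\in\mathcal F$ to be a chain in the new partial order, and adjoin a fresh element $\infty_X$ declared to be its join. Almost disjointness keeps these joins separate: no infinite set can be contained in two distinct members of $\mathcal F$, so no element other than the explicitly adjoined $\infty_X$ can serve as an upper bound of all of $X$, and the new join points are therefore genuinely distinct. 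Iterating through sufficiently many stages produces $B_\kappa$ of arbitrarily large cardinality while preserving $\overline{f[A]}=B_\kappa$.

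The main obstacle --- and the reason the authors describe the proof as ``quite delicate'' --- is verifying, after the construction, that (i) $B_\kappa$ really is a cpo, i.e.\ every chain (not merely the designed directed subsets $X\in\mathcal F$) admits a join, and (ii) the directed-join closure of $f[A]$ actually reaches all of $B_\kappa$, so that $f$ is indeed epic. Both demand a careful combinatorial analysis of how chains can form between different members of the a.d.\ family and across different iteration stages; the finite-intersection property of the a.d.\ family is the key lever, preventing long ``unintended'' chains from spanning distinct $X,Y\in\mathcal F$ and so keeping the joins behaving as designed.
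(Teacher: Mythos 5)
Your overall strategy is the paper's: iterate through the ordinals, at each stage using an almost disjoint family to impose new chain orderings on existing elements and adjoin pairwise distinct joins, so that the inclusions stay epic while the cardinality grows without bound. However, there are two genuine gaps. The first is your choice of base object. If $A$ is the chain $\omega+1$ (with or without an extra bottom), then for \emph{every} cpo map $f\colon A\to B$ the image $f[A]$ is already closed under directed joins: $f[A]$ is a chain, so any directed $D\subseteq f[A]$ is a chain, $f^{-1}(D)$ is a chain in $A$ with a join $s$, and since $f$ preserves joins of chains, $\bigvee D=f(s)\in f[A]$. By the epimorphism characterization, every epi out of $\omega+1$ is surjective, so $\omega+1$ has only a set of quotients and the construction never gets off the ground. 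What the construction actually requires is an infinite \emph{antichain} of maximal elements in the image, because the whole mechanism consists of linearly reordering subsets of previously incomparable maximal elements inside a larger poset; this is why the paper starts from $\aleph_0\cdot 2$, the countable antichain with a bottom, and why at every stage the almost disjoint family is taken on the antichain $X_\delta$ of maximal elements rather than on arbitrary subsets of the current poset (an arbitrary subset cannot in general be reordered as a chain compatibly with the ambient order).

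The second gap is in the combinatorics at uncountable stages. ``Infinite subsets with pairwise finite intersections'' gives a large family over a countable base, but to pass from a stage of size $\aleph_\delta$ to one of size $\aleph_{\delta+1}$ you need, provably in ZFC, a family of $\aleph_{\delta+1}$ subsets of a set of size $\aleph_\delta$; the version that works in ZFC (Baumgartner, cited in the paper) uses subsets of size $\cof\aleph_\delta$ with pairwise intersections of size $<\cof\aleph_\delta$. One then orders each such subset as a chain of type $\cof\aleph_\delta$, so that the small intersections are non-cofinal and the adjoined joins are genuinely distinct; the finite-intersection version is not known to produce large enough families at all cardinals. Finally, your worries (i) and (ii) are resolved not by a delicate analysis of unintended chains but structurally: restricting the reordered sets to the maximal antichain prevents new chains from interacting with the old order, and each adjoined point is by construction the directed join of a set of old elements, so the closure condition for epimorphy is immediate.
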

\begin{proof}
This follows from the second implication in Lemma \ref{lem-epi}, since for an epimorphism $f \colon A \to B$ in $\CPO$ we have $|B|\leq 2^{|A|}$ because $\operatorname{cl}(f[A]) = B$.
\end{proof}

\begin{rem} 
The results obtained above allow us to conclude that the category $\mathbf{CPO}$ is nearly locally $\aleph_0$-presentable. Indeed, it is cocomplete, co-wellpowered, and the ordinal $3$ is a nearly $\aleph_0$-presentable object that forms a strong generator in $\mathbf{CPO}$. 
\end{rem}

\section{Strong epimorphisms in $\CPO$}
In this section we characterize strong epimorphisms in $\CPO$.
\begin{propo}\label{str_epi_char}
A cpo map $f \colon A \to B$ is a strong epimorphism in $\CPO$ if and only if it satisfies
\begin{enumerate}[(i)]
\item $\operatorname{dcl}(f[A]) = B$,
\item if $b_1, b_2 \in f[A]$ and $b_1 \leq b_2$, then there exist elements $a \in A$, $a' \in A$, an ordinal $\gamma$, and sets $\{a_\delta \in A \mid \delta \leq \gamma\}$, $\{a'_\delta \in A \mid \delta \leq \gamma\}$ such that $f(a) = b_1$, $f(a') = b_2$, $a \leq a_0$, $a'_\gamma \leq a'$, $a'_\delta \leq a_{\delta + 1}$ for each ordinal $\delta < \gamma$, $f(a_\delta) = f(a'_{\delta})$ for each ordinal $\delta \leq \gamma$, and for each limit ordinal $\delta \leq \gamma$ there exists an ordinal $\xi < \delta$ such that $a'_\xi \leq a_\delta$, and
\item if $b \leq \bigvee_{i \in I} b_i$, where $\alpha$ is an ordinal, $b \in f[A]_\alpha$, the set $\{b_i \in f[A]_\alpha \mid i \in I\}$ is directed in $f[A]_\alpha$, and $\bigvee_{i \in I} b_i \not\in f[A]_\alpha$, then there exists $j \in I$ such that $b \leq b_j$.
\end{enumerate}
\end{propo}
\begin{proof}
First, note that since $\CPO$ has pullbacks \cite[Theorem 2.6]{M}, we know that strong epimorphisms and extremal epimorphisms coincide \cite[0.5]{AR}.

Suppose that the conditions $(i)$, $(ii)$, $(iii)$ hold. Let $A \xrightarrow{g} C \xrightarrow{m} B$ be a factorization of $f$ such that $m$ is a monomorphism. Since extremal epimorphisms and strong epimorphisms coincide, it suffices to show that $m$ is surjective and order-reflecting, and then the proof of the fact that $f$ is a strong epimorphism will be finished. Let us proceed by transfinite induction.
\begin{enumerate}[(a)]
\item If $b \in f[A]_0$, then there obviously exists $a \in A$ such that $$m(g(a)) = f(a) = b.$$ Furthermore, we will show that for each $b_1 \in f[A]_0$, $b_2 \in f[A]_0$, the inequality $b_1 \leq b_2$ implies $m^{-1}(b_1) \leq m^{-1}(b_2)$. Indeed, by assumption there exist elements $a \in A$, $a' \in A$, an ordinal $\gamma$, and sets $\{a_\delta \in A \mid \delta \leq \gamma\}$, ${\{a'_\delta \in A \mid \delta \leq \gamma\}}$ satisfying $(ii)$. Since $m$ is a monomorphism, we know that $g(a_\delta) = g(a'_\delta)$ for each $\delta \leq \gamma$. Thus, we get that $m(g(a)) = b_1$, $m(g(a')) = b_2$, $g(a) \leq g(a_0)$, $g(a_\gamma) \leq g(a')$, $g(a_\delta) \leq (a_{\delta + 1})$ for each ordinal $\delta < \gamma$, and for each limit ordinal $\delta \leq \gamma$ there exists an ordinal $\xi < \delta$ such that $g(a_\xi) \leq g(a_\delta)$. Using transfinite induction with respect to the ordinal $\delta$ it is easy to see that $g(a) \leq g(a_\delta)$ for each ordinal $\delta \leq \gamma$. Indeed, we know that $g(a) \leq g(a_0)$. If we know that $g(a) \leq g(a_\delta)$ for some $\delta < \gamma$, then, using transitivity and the fact that $g(a_\delta) \leq g(a_{\delta + 1})$, we obtain $g(a) \leq g(a_{\delta + 1})$. And finally, if $\delta \leq \gamma$ is a limit ordinal and we know that $g(a) \leq g(a_\xi)$ for all $\xi < \delta$, then we use transitivity and the fact that we know that there exists $\xi < \delta$ such that $g(a_\xi) \leq g(a_\delta)$ to obtain $g(a) \leq g(a_\delta)$. This finishes the proof by transfinite induction, and thus we get that $g(a) \leq g(a_\gamma) \leq g(a')$.
\item Let $\alpha$ be an ordinal and suppose that we've already found preimages of all elements of $f[A]_\alpha$ under $m$ and that for each $b_1 \in f[A]_\alpha$, $b_2 \in f[A]_\alpha$, the inequality $b_1 \leq b_2$ implies $m^{-1}(b_1) \leq m^{-1}(b_2)$. Consider an element $b \in f[A]_{\alpha + 1}$, i.e.\ $b = \bigvee_{i \in I} b_i$, where each $b_i \in f[A]_\alpha$ and the set $\{b_i \mid i \in I\}$ is directed in $f[A]_\alpha$. The set $\{m^{-1}(b_i) \mid i \in I\}$ is directed because by inductive hypothesis $m^{-1}|_{f[A]_\alpha}$ preserves partial order. Therefore $$m\Big(\bigvee_{i \in I} m^{-1}(b_i)\Big) = \bigvee_{i \in I} m(m^{-1}(b_i)) = \bigvee_{i \in I} b_i = b.$$ Now suppose that $b_1 \in f[A]_{\alpha + 1}$, $b_2 \in f[A]_{\alpha + 1}$, and $b_1 \leq b_2$. This means that $b_1 = \bigvee_{i \in I} b_i$, $b_2 = \bigvee_{j \in J} b_j$, where each $b_i \in f[A]_\alpha$, each $b_j \in f[A]_\alpha$, and the sets $\{b_i \mid i \in I\}$, $\{b_j \mid j \in J\}$ are directed in $f[A]_\alpha$. For each $i \in I$ we know that $b_i \leq \bigvee_{j \in J} b_j$. If $\bigvee_{j \in J} b_j \in f[A]_\alpha$, then the inductive hypothesis implies the following inequality $m^{-1}(b_i) \leq m^{-1}(\bigvee_{j \in J} b_j) = \bigvee_{j \in J} m^{-1}(b_j)$, which gives us that $\bigvee_{i \in I} m^{-1}(b_i) \leq \bigvee_{j \in J} m^{-1}(b_j)$. If $\bigvee_{j \in J} b_j \not\in f[A]_\alpha$, then using $(iii)$ we get that for each $i \in I$ there exists $j_i \in J$ such that $b_i \leq b_{j_i}$. By inductive hypothesis we know that this implies $m^{-1}(b_i) \leq m^{-1}(b_{j_i}) \leq \bigvee_{j \in J} m^{-1}(b_j)$. Hence, $\bigvee_{i \in I} m^{-1}(b_i) \leq \bigvee_{j \in J} m^{-1}(b_j).$ Furthermore, note that $$m\Big(\bigvee_{i \in I} m^{-1}(b_i)\Big) = \bigvee_{i \in I} m(m^{-1}(b_i)) = \bigvee_{i \in I} b_i = b_1,$$ and completely analogously $m(\bigvee_{j \in j} m^{-1}(b_j)) = b_2$.
\item Let $\beta$ be a limit ordinal and suppose that for each ordinal $\alpha < \beta$ we've already found preimages of all elements of $f[A]_\alpha$ under $m$ and that for each $b_1 \in f[A]_\alpha$, $b_2 \in f[A]_\alpha$, the inequality $b_1 \leq b_2$ implies $m^{-1}(b_1) \leq m^{-1}(b_2)$. Consider an element $b \in f[A]_{\beta}$, then we know that $b \in f[A]_{\alpha}$ for some ordinal $\alpha < \beta$, and thus by inductive hypothesis we have a preimage of $b$ under $m$. Now suppose that $b_1 \in f[A]_\beta$, $b_2 \in f[A]_\beta$, and $b_1 \leq b_2$. Then there exist ordinals $\alpha_1 < \beta$, $\alpha_2 < \beta$ such that $b_1 \in f[A]_{\alpha_1}$, $b_2 \in f[A]_{\alpha_2}$. Take $\alpha := \max\{\alpha_1, \alpha_2\}$. Then $b_1 \in f[A]_\alpha$, $b_2 \in f[A]_\alpha$, $\alpha < \beta$, and thus by the inductive hypothesis we know that $m^{-1}(b_1) \leq m^{-1}(b_2)$.
\end{enumerate}

Now suppose that $f \colon A \to B$ is a strong epimorphism. Note that each strong epimorphism is dense, since each cpo embedding is a monomorphism. Therefore, $\operatorname{dcl}(f[A]) = B$. Let us now denote $e_0 := f$, $A_0 := A$. Perform the transfinite construction from the proof of \cite[Lemma 4.2]{PR} for $e_0 \colon A_0 \to B$. Since $\CPO$ is co-wellpowered, we know that the transfinite construction stops. Using transfinite induction it is now easy to see that $e_0$ satisfies $(ii)$ and $(iii)$.
\end{proof}
\begin{coro}
If a cpo map $f \colon A \to B$ satisfies
\begin{enumerate}[(i)]
\item $\overline{f[A]} = B$, and
\item if $b_1, b_2 \in f[A]$, and $b_1 \leq b_2$, then there exist $a_1 \in A$, $a_2 \in A$ such that $f(a_1) = b_1$, $f(a_2) = b_2$, and $a_1 \leq a_2$,
\item if $b \leq \bigvee_{i \in I} b_i$, where $\alpha$ is an ordinal, $b \in f[A]_\alpha$, the set $\{b_i \in f[A]_\alpha \mid i \in I\}$ is directed in $f[A]_\alpha$, and $\bigvee_{i \in I} b_i \not\in f[A]_\alpha$, then there exists $j \in I$ such that $b \leq b_j$,
\end{enumerate}
then $f \colon A \to B$ is a strong epimorphism in $\CPO$.
\end{coro}
\begin{proof}
This follows immediately from Proposition \ref{str_epi_char}.
\end{proof}

\end{document}